\def\cM{{\mathcal{M}}}
\def\cN{{\mathcal{N}}}
\def\cP{{\mathcal{P}}}
\def\gaussmean{{a}}
\def\RR{{\mathbb{R}}}
\newcommand{\norm}[1]{\left\lVert#1\right\rVert}
\def\minwrt[#1]{\underset{#1}{\mathrm{minimize }}}
\def\argminwrt[#1]{\underset{#1}{\text{arg min }}}
\newtheorem{example}{Example}
\newtheorem{prop}{Proposition}
\title{\LARGE \bf
Mixtures of ensembles:\\ System separation and identification via optimal transport\\
}
\author{Filip Elvander$^{1}$ and Isabel Haasler$^{2}$
\thanks{$^{1}$Filip Elvander is with the Department of Information~and Communications Engineering, Aalto University, Finland.
        {\tt\small filip.elvander@aalto.fi}}%
\thanks{$^{2}$Isabel Haasler is with the Division of Systems and Control, Department of Information Technology, Uppsala University, Sweden.
        {\tt\small isabel.haasler@it.uu.se}}%
}
\begin{document}

\maketitle
\thispagestyle{empty}
\pagestyle{empty}

\begin{abstract}

Crowd dynamics and many large biological systems can be described as populations of agents or particles, which can only be observed on aggregate population level.
Identifying the dynamics of agents is crucial for understanding these large systems.
However, the population of agents is typically not homogeneous, and thus the aggregate observations consist of the superposition of multiple ensembles each governed by individual dynamics.
In this work, we propose an optimal transport framework to
jointly separate the population into several ensembles and
identify each ensemble’s dynamical system, based on aggregate observations of the population. 
We propose a bi-convex optimization problem, which we solve using a block coordinate descent with convergence guarantees.
In numerical experiments, we demonstrate that the proposed approach exhibits close-to-oracle performance also in noisy settings, yielding accurate estimates of both the ensembles and the parameters governing their dynamics.

\end{abstract}

\section{Introduction}

Dynamical systems describe many phenomena of large populations of agents, or particles, in control theory \cite{dogbe2010modeling}, biology \cite{zhao2003dynamical}, and economics \cite{brock2018nonlinearity}.
In reality, the dynamical models of these systems are often not known, and identifying them helps to uncover the fundamental principles governing the physical phenomena in order to make predictions and develop interventions.
For example, understanding crowd dynamics helps making traffic predictions for designing efficient and safe evacuation strategies \cite{kachroo2008pedestrian}, whereas identifying disease dynamics aids in developing treatment plans for patients \cite{wolkenhauer2013road}.

Often, only aggregate observations of the agents or particles are available. This may be due to, e.g., privacy concerns, economic constraints, or physiological reasons.
For example, human mobility data can typically not be collected on individual level, and medical data is usually anonymized; large flocks of birds or swarms of fish are more efficiently tracked by following the population rather than individual agents;
measuring single-cell data destroys the cell in the process, and thus dynamics can only be modeled on population level \cite{bunne2023learning, schiebinger2019optimal}.
However, most populations are heterogeneous and consist of several subpopulations, or ensembles, that are governed by different dynamics. 
For example, cell dynamics are known to be heterogeneous \cite{goldman2019impact, spiller2010measurement}, and thus an important problem in medical research is to identify their subpopulations \cite{buettner2015computational, sun2022identifying}.
Furthermore, crowd dynamics are typically heterogeneous \cite{szwaykowska2015collective}.
For example, in human crowd dynamics, groups of agents can have different objectives and movement patterns \cite{scherrer2018travelers, wu2019inferring}, whereas in animal swarms, the population can even consist of several different species \cite{ward2018cohesion}.
In these cases, aggregate observations result from the superposition of multiple ensembles, each following its own dynamical model.

Motivated by this, we in this work address the challenge of jointly separating a (heterogeneous) population into underlying ensembles and identifying each ensemble's dynamical system, based on aggregate observation of the full population.
Solving this problem has broad implications.
For example, in the analysis of single-cell data, ensemble-level information can improve the prediction of an individual's disease stage and treatment response \cite{sun2022identifying}. 
In crowd dynamics, knowledge about different ensembles in the crowd and their underlying objectives and movement patterns could be used to improve traffic planning. 
Despite its importance, it is inherently difficult to disentangle distinct dynamical ensembles from aggregated data, as exemplified in Figure \ref{fig:ex_3groups_illustration}.
\begin{figure}
    \centering
    \includegraphics[trim={40 0 40 0}, clip, width=\linewidth]{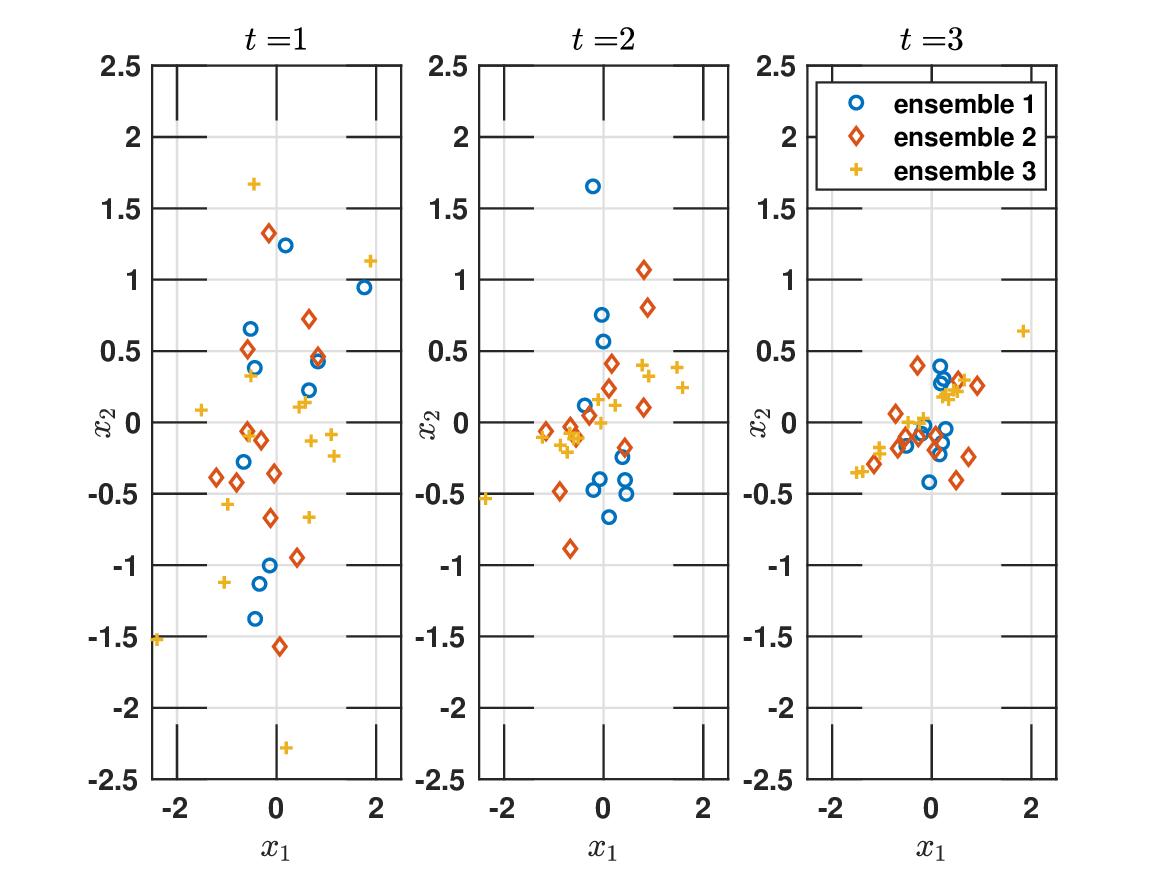}
    \caption{Time evolution of systems belonging to $K = 3$ different ensembles with different linear dynamics for the state $x^{(t)} = (x_1,x_2) \in \RR^2$. Three consecutive time steps $t$ are shown.}
    \label{fig:ex_3groups_illustration}
\end{figure}
Here, we have modeled three ensembles of agents with state space in $\RR^2$ and different linear dynamics. From aggregate snapshot observations of the entire population, it is impossible to distinguish the groups by eye.
The method we propose accurately assigns particles to ensembles and identifies each ensemble's dynamics.
Although this is a prevalent problem in a multitude of applications, we are not aware of any previous approach addressing it.

Herein, we propose a framework for performing joint ensemble separation and dynamical system identification based on the concept of optimal transport.
Optimal transport theory provides estimation and control principles for distributions \cite{chen2021optimal, haasler2021control}, and has previously been used to model the collective behavior of large multi-agent systems \cite{chen2016optimal, singh2022inference}.
Some recent works have augmented these methods to consider multiple classes of agents that can have different dynamics, for instance multi-commodity network flows and multi-species mean field games \cite{benamou2019entropy, haasler2020optimal, haasler2024scalable, ringh2023mean, ringh2024graph}.
Note that in these works the dynamics and the class assignments of the agents are known, whereas in the present paper we consider the inverse problem of identifying them.

Optimal transport has also been used for modeling the evolution of cell populations \cite{schiebinger2019optimal, bunne2023learning}.
In this context, recent work has addressed the problem of identifying a linear system that describes the observed data, assuming a homogeneous population \cite{lamoline2024gene}. This can be seen as a type of inverse optimal transport problem \cite{stuart2020inverse} in which the parameters of the linear system are optimized jointly with the transport problem itself.
Our work can be seen as a generalization of this work to the setting of heterogeneous populations where, on top of identifying the dynamics, we also have to identify the assignments to the different classes.

To the best of our knowledge, previous works on joint system identification of several systems consider only settings, where individual particle trajectories are available \cite{chen2022learning, fernex2021cluster, hsu2020linear}.
In these settings, the tasks of identifying the dynamics and clustering them can be decoupled.
In contrast to this, we consider the setting where only aggregate observations on population level are available.
Our optimal transport framework assumes indistinguishable agents and solves their association between several time instances implicitly.
It may be noted that there has recently been work on combining OT-based clustering and parameter estimation in the context of maximum likelihood estimation for Gaussian mixture models \cite{diebold2024unified,rigollet2018entropic, vayer2025note}, which although superficially similar, is distinct from the framework proposed herein.

The paper is structured as follows.
In Section \ref{sec:prel} we introduce the optimal transport problem and motivate our novel methodology on a simple illustrative example.
In Section \ref{sec:form} we state the problem formally and present an algorithm with convergence guarantees.
Finally, we demonstrate the method's performance in numerical experiments in Section~\ref{sex:exp}.
\section{Preliminaries} \label{sec:prel}
In this Section we introduce the optimal transport problem and demonstrate the intuition for our new framework for system separation and identification on a simple toy example.

\subsection{Optimal transport}
Consider two non-negative measures $\mu,\nu \in \cM_+(X)$ over a space $X$ with the same total mass.
The optimal transport problem is to find the most efficient way to move the mass from $\mu$ to $\nu$ with respect to an underlying cost function $c:X\times X \to \RR_+$, where $c(x,y)$ describes the cost for moving a unit mass from $x\in X$ to $y \in X$.
The optimal transport between the given measures is described by a transport plan, which is a measure $m \in \cM_+(X\times X)$, where $m(x,y)$ describes the amount of mass moved from $x\in X$ to $y \in X$. Thus, the optimal transport plan $m$ is the solution to \cite{villani2021topics}
\begin{equation} \label{eq:ot}
    \begin{aligned}
     \minwrt[m \in \cM_+(X\times X)]& \ \int_{X \times X} c(x,y) dm(x,y) \\
        \text{subject to }& \   \int_{A \times X} dm(x,y) = \int_A d\mu(x), \\
        & \  \int_{X \times B} dm(x,y) = \int_B d\nu(x), \\
        & \text{for all measurable sets } A,B \subset X.
    \end{aligned}
\end{equation}
Note that the constraints in \eqref{eq:ot} impose that the transport plan $m$ indeed transports the mass from $\mu$ to $\nu$.

\subsection{Motivating example} \label{subsec:example}

In this work, we consider the setting of separating the transport between two measures into several different transport plans.
As an example, consider two Gaussian mixtures in $ X=\RR$, given by 
\begin{equation} \label{eq:gmms}
\begin{aligned}
    & \mu \sim p \cdot \cN(\gaussmean,\sigma) + p' \cdot  \cN(\gaussmean',\sigma'), \\
    & \nu \sim p' \cdot \cN(\gaussmean,\sigma') + p \cdot  \cN(\gaussmean',\sigma),
\end{aligned}
\end{equation}
with weights $p,p' \in \RR_+$, means $\gaussmean,\gaussmean' \in \RR$, and standard deviations $\sigma,\sigma' \in \RR_+$. 
Note that the modes of the two Gaussian mixtures are identical, and only their locations are switched.

Our framework assumes that the two modes correspond to different subpopulations that are each governed by their own dynamics.
For instance, we could be observing the superposition of two separate phenomena.
Thus, we would like to find two transport plans that move the mass between the corresponding modes of the Gaussian, without splitting up the subpopulations.
Since the modes have exactly the same shape, it makes sense to look for transport plans that preserve the shape of the modes throughout.

The standard optimal transport problem does not take this structural information into account.
This is illustrated in Figure \ref{fig:ex_dist}, which shows the solution to an optimal transport problem \eqref{eq:ot} with cost function $c(x,y)= \|x-y\|_2^2$.
The optimal transport plan assigns a fraction of the mass to move very little. 
Additionally, since the modes do not have the same mass, a large part of mass must be moved from one mode to the other.

In this work we propose to separate the populations by finding several transport plans, which are each associated with a parameterized cost function $c_\theta:X \times X \to \RR_+$, where $\theta$ is a parameter that we optimize over.
Thus, for the example with two Gaussian mixtures in Figure \ref{fig:ex_dist}, our aim is to find two transport plans $m_1,m_2 \in \cM(X\times X)$, and utilize cost functions with a shift parameter $\theta$, that is $$c_\theta(x,y) = \|x-y+\theta\|_2^2.$$
%
This cost allows shifting the mass in each mode jointly, while enforcing that the shape of each mode remains as similar as possible.
\begin{figure}
    \begin{minipage}{0.49\linewidth}
    \includegraphics[trim={50 0 30 0}, clip, width=\linewidth]{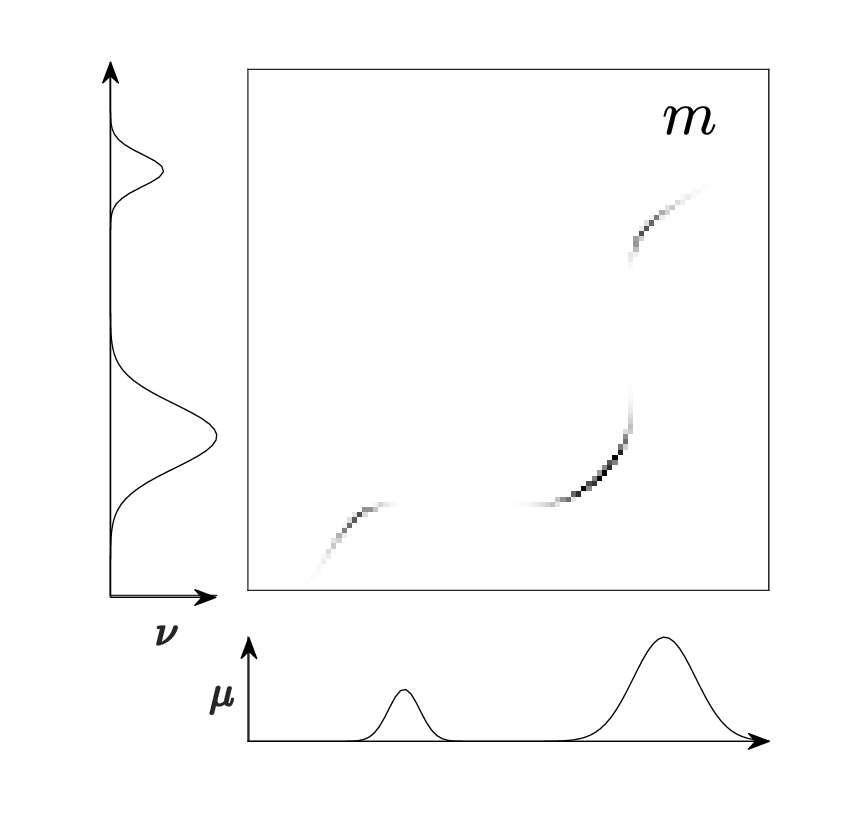} \end{minipage} \hfill
    \begin{minipage}{0.49\linewidth}
   \includegraphics[trim={70 0 55 50}, clip, width=\linewidth]{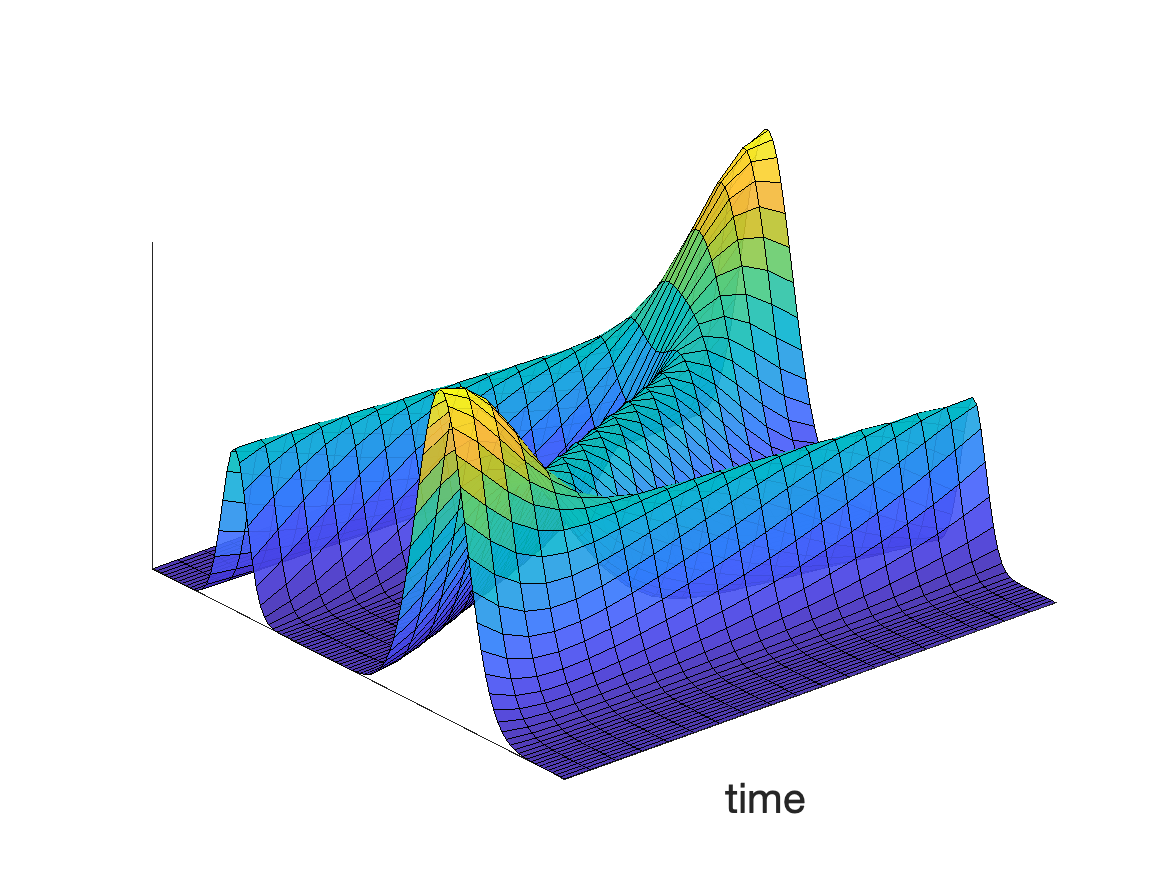}
   \end{minipage}
    \caption{Classical optimal transport between two Gaussian mixtures. The left plot shows the given distributions and optimal the transport plan $m$, where dark areas correspond to support of $m$ in $\RR \times \RR$. The right plot shows the evolution of the distribution over time from $\mu$ to $\nu$.}   
    \label{fig:ex_dist}
\end{figure}
That is, we propose to solve
\begin{align} \label{eq:ot_ex}
     & \minwrt[\substack{m_1,m_2 \in \cM_+(X\times X)\\ \theta_1. \theta_2 \in \RR } ] \ \langle c_{\theta_1}, m_1 \rangle  + \langle c_{\theta_2} , m_2 \rangle \\
      & \text{subject to  }    \int_{A \times X} \!\!\! dm_1(x,y) + \int_{A \times X} \!\!\! dm_2(x,y) = \int_A \!\! d\mu(x), \nonumber \\
        &  \hspace{42pt} \int_{X \times B}  \!\!\!dm_1(x,y) + \int_{X \times B} \!\!\! dm_2(x,y) = \int_B \!\! d\nu(y), \nonumber \\
        & \hspace{42pt} \text{for all measurable sets } A,B \subset X, \nonumber 
\end{align}
where the inner product is defined as
\begin{equation} \label{eq:innerprod}
    \langle c, m \rangle :=  \int_{X \times X} c(x,y) dm(x,y). 
\end{equation}
Given the Gaussian mixtures in \eqref{eq:gmms}, the optimization problem has a unique minimizer, which corresponds to moving the modes of the mixtures in $\mu$ to the corresponding modes in $\nu$.
More precisely, let the modes of $\mu$ be defined as $\mu_1 \sim p \cdot \cN(\gaussmean,\sigma)$ and $ \mu_2 \sim  p' \cdot  \cN(\gaussmean',\sigma')$.
Then, the unique optimal solution of \eqref{eq:ot_ex} is given by the shift parameters and transport plans
\begin{equation*}
    \begin{aligned}
&\theta_1=\gaussmean'-\gaussmean, \quad m_1 = (id, T_1)_\#\mu_1 \\
&\theta_2=\gaussmean-\gaussmean', \quad m_2 = (id, T_2)_\#\mu_2,
    \end{aligned}
\end{equation*}
where $id:\RR \to \RR$ is the identity mapping, the mappings\footnote{These mappings are the Monge optimal transport maps.} 
$T_1,T_2:\RR \to \RR$ are defined as $T_1(x) = x + \theta_1$ and $T_2(x) = x + \theta_2$, and $\#$ denotes the push-forward operator.
In fact, it can be easily checked that with these variables, the minimal objective value in \eqref{eq:ot_ex} is zero. 

In Figure \ref{fig:ex_plans} the solution is demonstrated on the same setting as in Figure \ref{fig:ex_dist}.
\begin{figure}
    \begin{minipage}{0.49\linewidth}
    \includegraphics[trim={50 0 30 0}, clip, width=\linewidth]{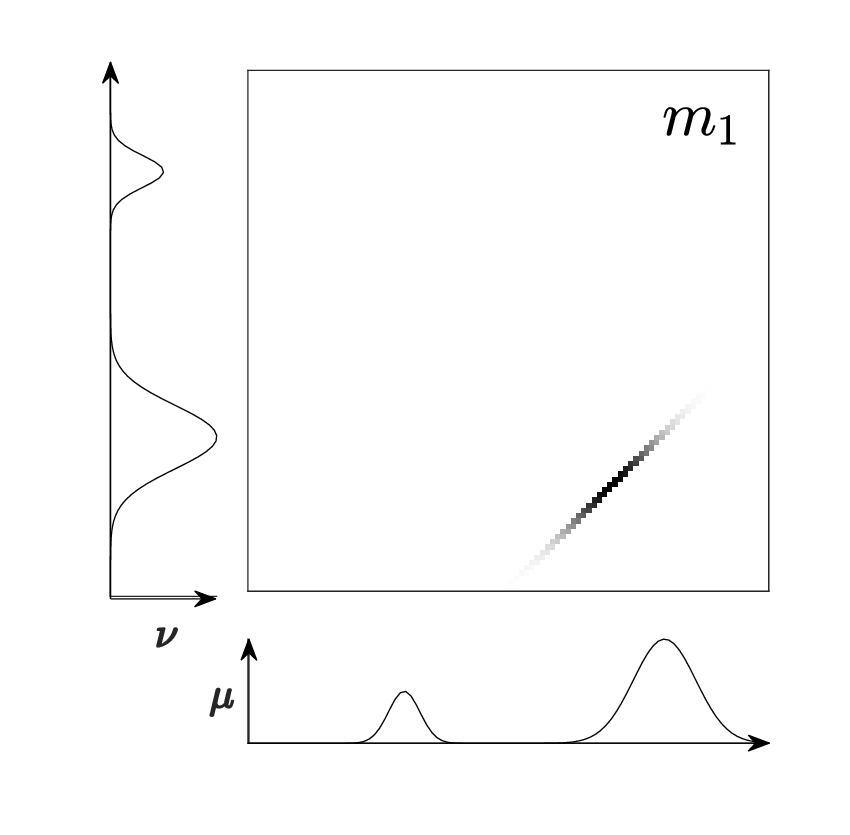}\end{minipage} \hfill
    \begin{minipage}{0.49\linewidth}
   \includegraphics[trim={70 0 55 50}, clip, width=\linewidth]{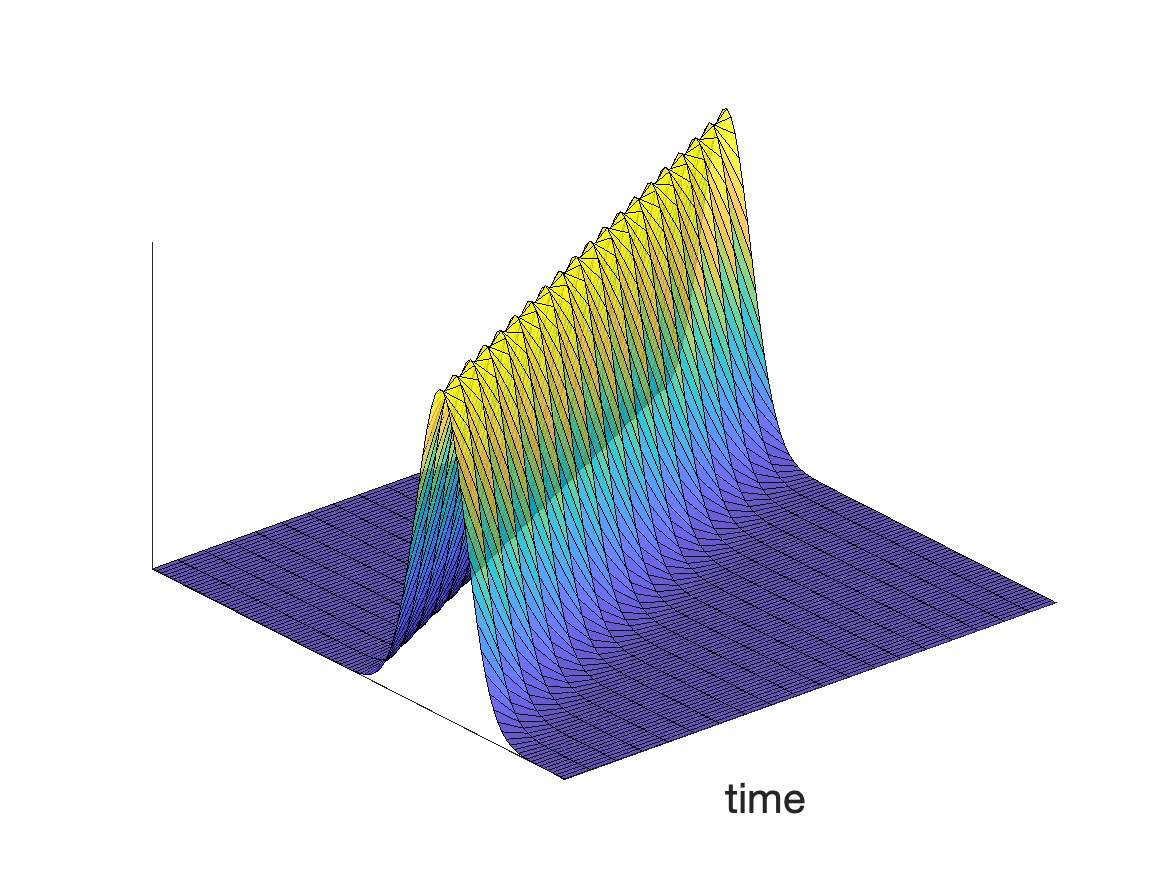} 
   \end{minipage}
   \begin{minipage}{0.49\linewidth}
    \includegraphics[trim={50 0 30 0}, clip, width=\linewidth]{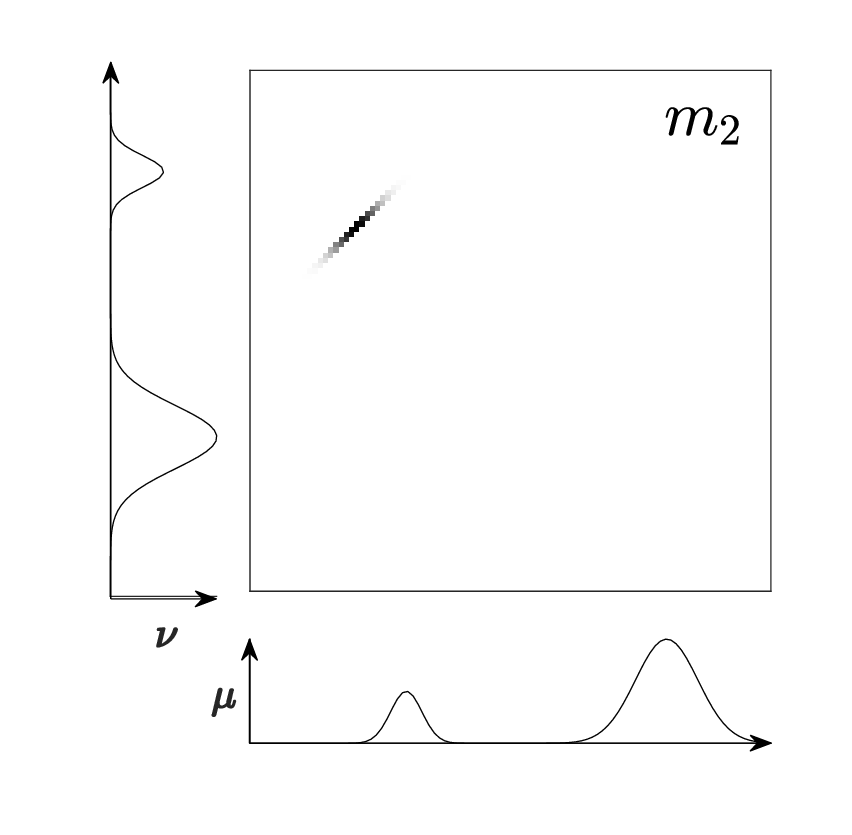}  \end{minipage} \hfill
    \begin{minipage}{0.49\linewidth}
   \includegraphics[trim={70 0 55 50}, clip, width=\linewidth]{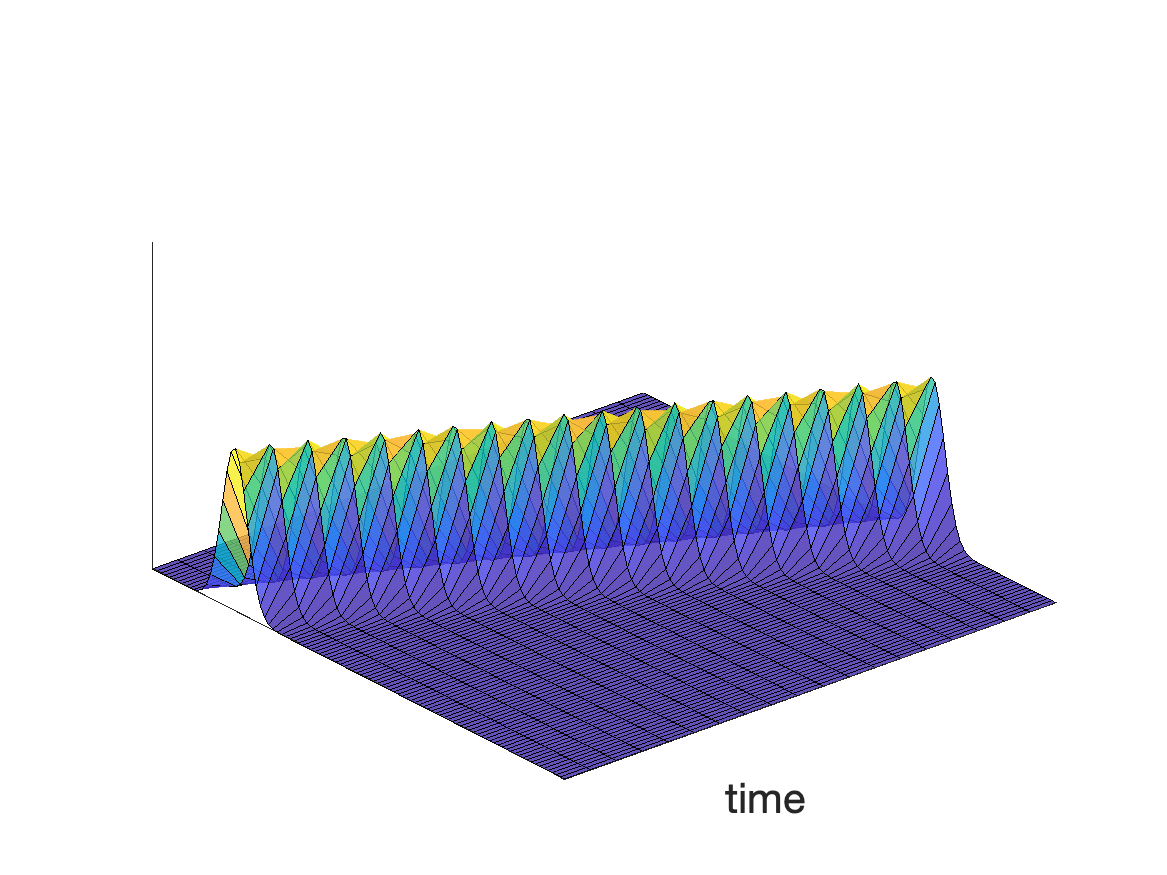} 
   \end{minipage}
    \caption{Separated optimal transport between two Gaussian mixtures. The plots on the left show the given distributions and optimal the transport plans $m_1$ and $m_2$,
where dark areas correspond to support in $\RR^2$. The plots on the right
show the corresponding evolutions of the distributions over time.}
    \label{fig:ex_plans}
\end{figure}
One can see that each transport map corresponds to one of the modes in the Gaussian mixtures.
Moreover, the support of the transport plans $m_1$ and $m_2$ concentrates on the lines ${(x,x+\theta_1), x \in \RR}$ and ${(x,x+\theta_2), x \in \RR}$, respectively.
Thus, the optimal transport cost in \eqref{eq:ot_ex} is zero.

Following the same arguments, any population represented by a Gaussian mixture model with $K$ modes can be separated into $K$ ensembles, when the two given distributions contain the same modes with different means.

\section{Problem formulation} \label{sec:form}

Building on the intuition from the example in Section \ref{subsec:example}, we now propose our general framework to
jointly separate a population into a set of ensembles and
identify the dynamical system governing each ensemble, based on aggregate state observations.

Therefore, consider an ensemble of $K$ populations with state space in $X=\RR^d$.
We assume that each ensemble is described by a discrete-time dynamical system
\begin{equation} \label{eq:dyanmics}
    x^{(t+1)} = \Phi_{\theta_k}(x^{(t)}), \quad \text{ for } k=1,\dots,K,
\end{equation}
where $x^{(t)} \in \RR^d$ is the state of a particle at time $t$, and $\Phi_\theta: \RR^d \to \RR^d$ is a state transition function parameterized by an unknown parameter $\theta \in \RR^P$.
Moreover, we are given aggregate state observations of all ensembles at time instances $t=1,\dots,T$, which are described by measures $\mu^{(t)}\in \cM_+(\RR^d)$, for $t=1,\dots,T$.
These measures may describe a continuous distribution of particles, as in \eqref{eq:gmms},
or they may describe empirical distributions of a finite set of agents, or particles,
\begin{equation} \label{eq:particle_distribution}
    \mu^{(t)} \sim \sum_{k=1}^K \sum_{n=1}^{N_k} \delta(x_{k,n}^{(t)}),
\end{equation}
where $x_{k,n}^{(t)} \in \RR^d$ denotes the location of the $n$-th agent in the $k$-th population at time $t$, and $N_k$ denotes the number of agents in the $k$-th population, as in Figure \ref{fig:ex_3groups_illustration}.
Each observed measure $\mu^{(t)}$ is the superposition of $K$ measures $\mu_k^{(t)} \in \cM_+(\RR^d)$, which describe the distribution of the $k$-th ensemble at time $t$, that is, $\sum_{k=1}^K \mu_k^{(t)} = \mu^{(t)}$ for $t=1,\dots,T$.
We aim to find these distributions $\mu_k^{(t)}$ for $k=1,\dots,K$ and $t=1,\dots,T$,
and to identify the dynamics \eqref{eq:dyanmics} of each ensemble by finding the parameters $\theta_k$, for $k=1,\dots,K$.

\subsection{Optimization problem}

We propose to separate the ensembles and identify the systems by solving an optimal transport problem similar to \eqref{eq:ot_ex}.
For a pair of consecutive states $(x^{(t)},x^{(t+1)})$ in the same ensemble, we define their transport cost as
\begin{align*}
    c_\theta(x^{(t)}, x^{(t+1)}) = \norm{ \Phi_\theta(x^{(t)}) - x^{(t+1)}  }_2^2.
\end{align*}
In order to separate the populations, and identify their dynamics, we find a transport plan $m^{(t)}_k$ for each ensemble $k=1,\dots,K$ and each time step $t=1,\dots,T-1$,
by solving the optimal transport problem
\begin{align} \label{eq:problem_form}
     & \minwrt[\substack{m_k^{(t)} \in \cM_+(X\times X) \\ k=1,\dots,K, t=1,\dots,T-1 \\ \theta_k \in \RR^P,\ \mu_k^{(t)} \in \cM_+(X) \\ k=1,\dots,K, t=1,\dots,T } ] \ \sum_{t=1}^{T-1} \sum_{k=1}^K \langle c_{\theta_k}, m^{(t)}_k \rangle   \\
      & \text{subject to  }   \int_{A \times X} \!\!\! dm^{(t)}_k(x,y) = \int_A  d\mu_k^{(t)}(x), \nonumber \\
        &  \hspace{42pt}  \int_{X \times B}  \!\!\!dm^{(t)}_k(x,y)  = \int_B  d\mu_k^{(t+1)}(x), \nonumber \\
        & \hspace{30pt} \text{for all measurable sets } A,B \subset X, \ t=1,\dots,T-1  \nonumber \\
        & \hspace{42pt}  \sum_{k=1}^K \mu_k^{(t)} = \mu^{(t)}, \quad t=1,\dots,T,  \nonumber
\end{align}
where the inner products in the objective are defined as in \eqref{eq:innerprod}.
Although the problem can be formulated for generic dynamical systems, in this first study we focus on linear dynamics.
\begin{example}[Linear dynamical systems] \label{ex:linear}
Consider the case, where the state transition functions $\Phi_{\theta}$ in \eqref{eq:dyanmics} define linear dynamics
\begin{align}\label{eq:linear_dynamics}
    x^{(t+1)} = A x^{(t)} + b.
\end{align}
%
%
where $A \in \RR^{d\times d}$ and $b \in \RR^d$.
In this case the system is parameterized by $\theta \in \RR^{d(d+1)}$, describing the elements in $ (A,b)$.
The ground transportation cost from $x^{(t)} \in \RR^d$ to $x^{(t+1)} \in \RR^d$ is defined as
\begin{align} \label{eq:cost_linear}
    c_\theta(x^{(t)},x^{(t+1)}) = \norm{Ax^{(t)}+b-x^{(t+1)}}_2^2.
\end{align}
In particular, the example in Section \ref{subsec:example} is a special case of this system with $d=1$, where $A=1$ and $\theta=b$.
\end{example}

\subsection{Solution method}

We solve problem \eqref{eq:problem_form} using a block coordinate descent scheme, similar to \cite{lamoline2024gene}.
The method is summarized in Algorithm \ref{alg:BCD}.
\begin{algorithm}[t]
    \caption{Block Coordinate Descent}\label{alg:BCD}
    \KwData{Initialize  $\{ \theta_k; k=1,\dots,K \}$}
     \While{not converged}{
\begin{enumerate}
    \item  Minimize \eqref{eq:problem_form} with respect to
    $\{m_k^{(t)}\}_{k=1,t=1}^{K,T-1}$ \\ and $\{\mu_k^{(t)}\}_{k=1,t=1}^{K,T}$,\\
    while keeping 
    $\{ \theta_k\}_{k=1}^K$ fixed
    \item  Minimize \eqref{eq:problem_form} with respect to $\{ \theta_k\}_{k=1}^K$,\\
    while keeping $\{m_k^{(t)}\}_{k=1,t=1}^{K,T-1}$ and\\ $\{\mu_k^{(t)}\}_{k=1,t=1}^{K,T}$
    fixed
\end{enumerate}       
     }
\end{algorithm}
It should be noted that problem \eqref{eq:problem_form} is generally highly non-convex.
However, note that the problem is linear in
$\{m_k^{(t)}\}_{k=1,t=1}^{K,T-1}$ and $\{\mu_k^{(t)}\}_{k=1,t=1}^{K,T}$.
In fact, step 1) in Algorithm \ref{alg:BCD} requires solving a sum of coupled optimal transport problems \eqref{eq:ot}.
Moreover, if the dynamics $\Phi_\theta$ in \eqref{eq:dyanmics} are linear in the parameter $\theta$, then \eqref{eq:problem_form} is convex in
$\{ \theta_k\}_{k=1}^K$.
This proves the following result.
\begin{prop} \label{prop:biconvex}
If the dynamics $\Phi_\theta$ in \eqref{eq:dyanmics} are linear in the parameter $\theta$, then the non-convex problem \eqref{eq:problem_form} is bi-convex in the sets
\vspace{-2pt}
\begin{equation*}
\left\{ \{m_k^{(t)}\}_{t=1}^{T-1}, \{\mu_k^{(t)}\}_{t=1}^{T} \right\}_{k=1}^K
\text{ and }
\{ \theta_k\}_{k=1}^K.    
\end{equation*} 
\end{prop}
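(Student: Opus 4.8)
The plan is to verify the two defining properties of bi-convexity in turn: that problem \eqref{eq:problem_form} is convex in the block $\left\{ \{m_k^{(t)}\}_{t=1}^{T-1}, \{\mu_k^{(t)}\}_{t=1}^{T} \right\}_{k=1}^K$ when $\{\theta_k\}_{k=1}^K$ is held fixed, and convex in $\{\theta_k\}_{k=1}^K$ when the transport plans and marginals are held fixed. In each case one must check convexity of both the objective and the feasible set with respect to the relevant block.

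For the first block, I would fix $\{\theta_k\}_{k=1}^K$, so that each cost $c_{\theta_k}$ is a fixed nonnegative function. As noted in the text preceding the statement, each term $\langle c_{\theta_k}, m_k^{(t)} \rangle = \int_{X \times X} c_{\theta_k}(x,y)\, dm_k^{(t)}(x,y)$ is linear in $m_k^{(t)}$, and the objective does not depend on the $\mu_k^{(t)}$, so the objective is linear over the whole block. The constraints of \eqref{eq:problem_form} consist of the linear marginal equalities relating $m_k^{(t)}$ to $\mu_k^{(t)}$ and $\mu_k^{(t+1)}$, the splitting constraints $\sum_{k=1}^K \mu_k^{(t)} = \mu^{(t)}$, and the cone memberships $m_k^{(t)}, \mu_k^{(t)} \in \cM_+(X)$. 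The equalities cut out an affine set, the cone memberships are convex, and hence the feasible set is convex; step 1) of Algorithm \ref{alg:BCD} is therefore a coupled linear program over measures, which is convex.

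For the second block, I would fix all $m_k^{(t)}$ and $\mu_k^{(t)}$. The constraints of \eqref{eq:problem_form} do not involve $\theta$, so the feasible set is all of $\RR^{KP}$, trivially convex, and it remains only to establish convexity of the objective in $\{\theta_k\}_{k=1}^K$. By assumption $\theta \mapsto \Phi_\theta(x)$ is linear for each fixed $x$, so for fixed $(x,y)$ the map $\theta \mapsto \Phi_\theta(x) - y$ is affine, whence $c_\theta(x,y) = \norm{\Phi_\theta(x)-y}_2^2$ is a convex quadratic in $\theta$, being the squared Euclidean norm of an affine function; for the linear dynamics of Example \ref{ex:linear} this is exactly the quadratic \eqref{eq:cost_linear} in the entries of $(A,b)$. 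Each $\langle c_{\theta_k}, m_k^{(t)} \rangle = \int_{X \times X} c_{\theta_k}(x,y)\, dm_k^{(t)}(x,y)$ is then an integral of functions convex in $\theta_k$ against the fixed nonnegative measure $m_k^{(t)}$, which preserves convexity; summing over $t$ and $k$, and noting that the $k$-th group of terms depends only on $\theta_k$ so the blocks decouple, yields a convex function of the stacked vector $\{\theta_k\}_{k=1}^K$.

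Together these two parts give bi-convexity. I expect the only point needing genuine care to be the last one: one must justify that integrating the family of convex functions $\theta \mapsto c_\theta(x,y)$ against the fixed nonnegative measure $m_k^{(t)}$ preserves convexity — immediate from nonnegativity of the measure applied pointwise in $(x,y)$ and then integrated — and, crucially, that it is the linearity of $\Phi_\theta$ in $\theta$ that makes $c_\theta(x,y)$ convex in $\theta$ rather than merely smooth. Everything else reduces to bookkeeping of linear constraints and the observation that the $\theta$-subproblem is unconstrained.
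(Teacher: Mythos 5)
Your proof is correct and follows essentially the same route as the paper, which establishes the result in the two sentences preceding the proposition: linearity of the objective and constraints in the block $\left\{ \{m_k^{(t)}\}_{t=1}^{T-1}, \{\mu_k^{(t)}\}_{t=1}^{T} \right\}_{k=1}^K$ (step 1 being a coupled linear program), and convexity in $\{\theta_k\}_{k=1}^K$ since $c_\theta(x,y)=\norm{\Phi_\theta(x)-y}_2^2$ is a convex quadratic in $\theta$ when $\Phi_\theta$ is linear in $\theta$. Your writeup merely supplies the details the paper leaves implicit (convexity of the feasible set, preservation of convexity under integration against nonnegative measures, and the unconstrained $\theta$-subproblem), all of which are sound.
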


\vspace{7pt}
In particular, in the case of linear dynamics, as in Example~\ref{ex:linear}, problem \eqref{eq:problem_form} is bi-convex and step 2) in Algorithm~\ref{alg:BCD} requires solving a linear least-squares problem, cf. \cite{lamoline2024gene}.

From Proposition \ref{prop:biconvex} it follows that Algorithm \ref{alg:BCD} results in a sequence of variables such that their objective value converges \cite[Theorem 4.5]{gorski2007biconvex}.
In the setting that we consider in our numerical experiments in Section \ref{sex:exp}, we can even guarantee almost-sure convergence of the variable sequence itself to a local optimum.

\begin{prop} \label{prop:conv}
    Consider problem \eqref{eq:problem_form} with cost \eqref{eq:cost_linear} and discrete measures $\mu{(t)}$ as in \eqref{eq:particle_distribution}.
    Assume that the support points $$\{x_{k,n}^{(t)} ;\ k=1,\dots,K, n=1,\dots,N_k, t=1,\dots,T \}$$
    of the observed distributions \eqref{eq:particle_distribution}
    are sampled from a continuous probability distribution $\cP$ over $\RR^{d^N}$, where ${N= T \cdot N_1 \cdots N_K}$.
    Then, Algorithm \ref{alg:BCD} converges $\cP$-almost-surely to a local optimum of \eqref{eq:problem_form}.
\end{prop}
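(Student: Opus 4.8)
The plan is to strengthen the objective-value convergence already furnished by Proposition~\ref{prop:biconvex} together with \cite[Theorem 4.5]{gorski2007biconvex} into convergence of the entire iterate sequence, and then to certify the limit as a local optimum. The lever for this is a single structural fact: outside a $\cP$-null set of support-point configurations, \emph{both} block subproblems of Algorithm~\ref{alg:BCD} possess \emph{unique} minimizers. The continuity of $\cP$ is exactly what is needed to discard the degenerate configurations on which uniqueness can fail, since these form algebraic (more generally, real-analytic) subsets of $\RR^{d^N}$ of Lebesgue, hence $\cP$-, measure zero.

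First I would examine step~2. With the cost \eqref{eq:cost_linear} and the discrete measures \eqref{eq:particle_distribution}, a fixed transport block reduces step~2 to a linear least-squares problem in each $\theta_k=(A_k,b_k)$, whose design matrix is built from the support points currently routed through ensemble $k$. This matrix has full column rank unless a fixed, non-vanishing polynomial in the support points is zero; that polynomial's zero set is $\cP$-null, so the least-squares solution is unique $\cP$-almost surely and the assignment $v\mapsto\theta(v)$, mapping a transport configuration to its optimal parameter, is single-valued.

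Next I would treat step~1, which is one large linear program --- a sum of coupled optimal transport problems over the plans $m_k^{(t)}$, with the split measures $\mu_k^{(t)}$ appearing as their marginals. A linear program fails to have a unique optimum only if two distinct vertices of its feasible polytope attain equal objective value, and $\langle c_{\theta}, m\rangle$ is linear in $m$, so this is governed by the tie equations $\langle c_\theta, v-v'\rangle=0$ over pairs of vertices. Since step~2 can ever output only the finitely many values $\theta(v)$, one per vertex $v$, I would substitute each such $\theta(v)$ --- itself a rational function of the data --- into these tie equations, obtaining finitely many real-analytic conditions on the support points, each defining a $\cP$-null set. I expect this to be the \textbf{main obstacle}: one must verify that each tie condition is \emph{not identically zero} (so that its zero set is genuinely null), which I would do by exhibiting a single non-degenerate configuration and invoking analyticity; handling the interplay between the data-dependence of $\theta(v)$ and the costs is where the care lies.

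Granting uniqueness throughout, the conclusion combines a finiteness argument with a local-optimality estimate. Because step~1 returns a vertex $v$ and step~2 returns $\theta(v)$, every post-iteration state lies in the \emph{finite} set $\{(v,\theta(v))\}$, so the monotone objective sequence takes finitely many values; uniqueness makes each sweep a strict descent unless the state is a fixed point, so the objective is constant after finitely many steps and the algorithm reaches a partial optimum $(v^\star,\theta^\star)$. To see it is a genuine local optimum, I would use three facts near $(v^\star,\theta^\star)$: the linear program grows at least linearly, $\langle c_{\theta^\star},m\rangle-\langle c_{\theta^\star},m^\star\rangle\ge\alpha\norm{v-v^\star}$ with $\alpha>0$, since $v^\star$ is the unique vertex minimizer; the objective is $\beta$-strongly convex in $\theta$ by the full-rank property; and its $\theta$-gradient vanishes at $(v^\star,\theta^\star)$ and is Lipschitz in $v$, so the cross term is of order $\norm{v-v^\star}\,\norm{\theta-\theta^\star}$. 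A Young-type inequality then shows the linear term dominates the cross term in a neighborhood, yielding a strict local minimum.
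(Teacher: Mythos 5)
Your proposal is correct in outline and rests on the same key lemma as the paper --- $\cP$-almost-sure uniqueness of the minimizers in both blocks of Algorithm~\ref{alg:BCD} --- but its second half takes a genuinely different route. The paper, after asserting uniqueness of the linear program in step 1) and of the least-squares problem in step 2), concludes in one line by invoking \cite[Theorem 4.9]{gorski2007biconvex}. You instead exploit the finiteness of the reachable states $(v,\theta(v))$, vertices of the transport polytope paired with their least-squares responses, to obtain strict descent and hence termination at a fixed point after finitely many sweeps, and you then certify that fixed point as a strict local minimum by combining the linear growth bound $\geq \alpha \norm{v - v^\star}$ of the LP objective at its unique vertex minimizer with strong convexity in $\theta$ and a cross term of order $\norm{v-v^\star}\,\norm{\theta-\theta^\star}$. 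This buys two things the citation does not: finite termination rather than asymptotic convergence, and a genuine local-optimality certificate --- the cited result delivers convergence to a \emph{partial} optimum, and partial optima of biconvex functions need not be local optima even when both partial minimizers are unique (consider $x^2 - 3xy + y^2$ at the origin), so your quantitative argument, which succeeds precisely because the growth in the transport block is linear while the coupling term is bilinear, substantiates the wording ``local optimum'' more solidly than the paper's own proof does. Your treatment of the uniqueness lemma is also more detailed than the paper's bare assertion (the tie equations $\langle c_{\theta(v)}, v'-v''\rangle = 0$ over the finitely many data-dependent costs are the right objects), though one soft spot survives in both arguments: at a vertex $v$ where some ensemble $k$ receives zero mass, or mass supported on fewer than $d+1$ affinely independent points, the least-squares problem for $\theta_k$ is rank-deficient for \emph{every} realization of the data, so your claim that the rank-failure polynomial is non-vanishing is false for such vertices and needs a patch (e.g., a tie-breaking rule leaving $\theta_k$ unchanged on empty ensembles, or an argument that such vertices are never selected for generic data); relatedly, the very first LP is driven by the initialization $\theta^{(0)}$, so genericity of the initialization (in particular pairwise distinct $\theta^{(0)}_k$) must be assumed as well, since ties induced by a symmetric initialization cannot be excluded by randomness over $\cP$ alone.
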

\begin{proof}
    We denote one realization of support points as
    $X := \{x_{k,n}^{(t)} ;\ k=1,\dots,K, n=1,\dots,N_k, t=1,\dots,T \}$.
    Since step 1) in Algorithm \ref{alg:BCD} is a linear program, where the constraints and costs are specified by the realization $X \sim \cP$, this step has a unique solution $\cP$-almost-surely.
    Moreover, step 2) in Algorithm \ref{alg:BCD} corresponds to solving a linear least squares problem based on $X$ and thus also has a unique solution $\cP$-almost-surely.
     Thus, by \cite[Theorem 4.9]{gorski2007biconvex}, the algorithm converges to a local optimum $\cP$-almost-surely.
\end{proof}

Optimal transport problems with continuous measures are typically solved by discretizing them and solving a linear program. Using the same approach for problem \eqref{eq:problem_form} with cost \eqref{eq:cost_linear}, Algorithm \ref{alg:BCD} converges to a local optimum of the discretized problem by a small modification of Proposition \ref{prop:conv}. \looseness=-1

Convergence to a global optimum can typically not be guaranteed for bi-convex problems. In order to avoid finding only suboptimal solutions, in practice we initialize Algorithm~\ref{alg:BCD} with several different values for $\theta$.

\section{Numerical experiments} \label{sex:exp}
To illustrate the proposed method, consider distributions on the form \eqref{eq:particle_distribution} where the state $x \in \RR^2$ is evolving according to linear dynamics on the form \eqref{eq:linear_dynamics}. We consider $K = 3$ ensembles consisting of $N_1 = 10$, $N_2 = 12$, and $N_3 = 15$ particles, respectively, observed at $T = 7$ time points. With this, we perform a Monte Carlo simulation where we in each simulation instance randomly sample dynamics $\theta_k = (A_k,b_k) \in \RR^{2\times 2}\times \RR^2$, $k = 1,2,3$, and the initial state for each particle from a Gaussian distribution (each matrix/vector element is independent standard Gaussian). The states then evolve according to the respective dynamics for $t = 2,\ldots,T$, but we for each $t$ also perturb each state with a white Gaussian noise with variance $\sigma^2$. The three first time steps of a sample trajectory are shown in Figure~\ref{fig:ex_3groups_illustration} for $\sigma^2 = 10^{-3}$.
\begin{figure}
    \centering
    \includegraphics[trim={20 0 40 10}, clip, width=\linewidth]{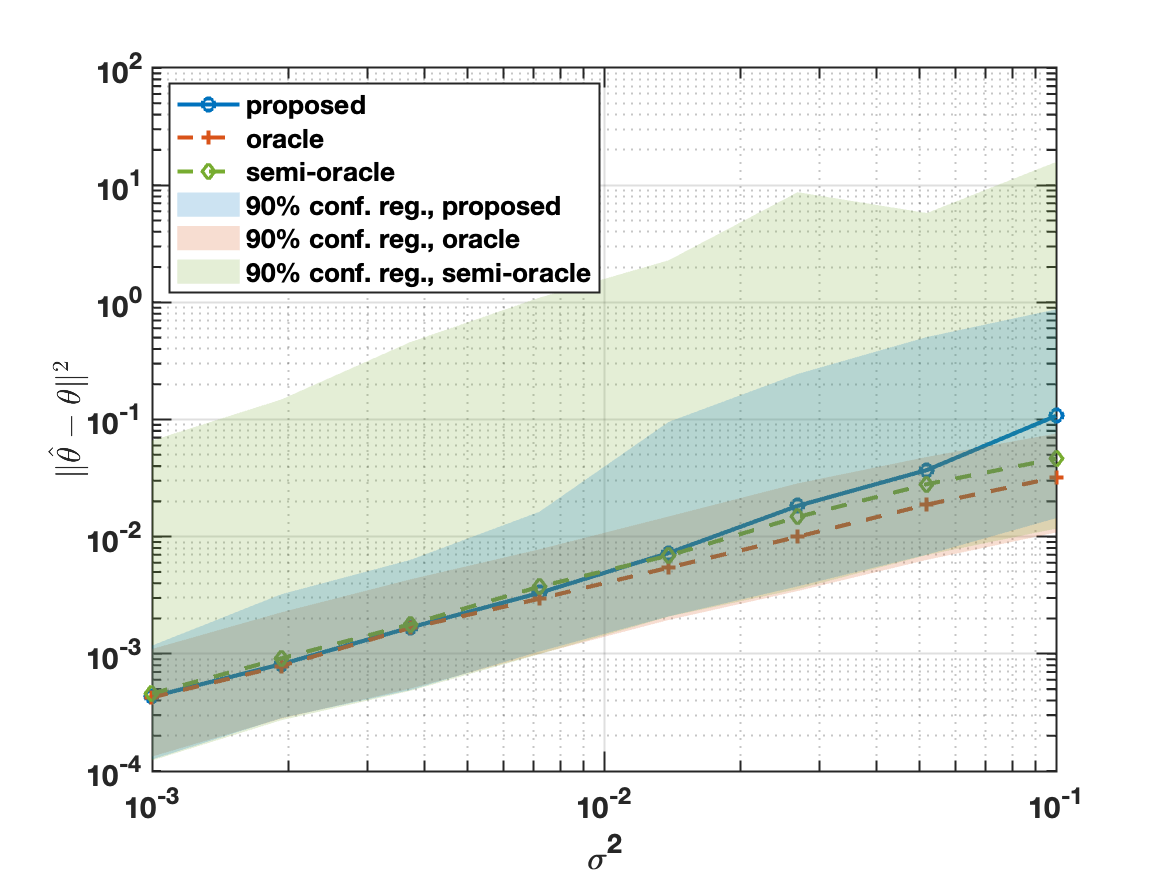}
    \caption{Squared error for estimates of the ensemble dynamics parameters for discrete distributions observed over $T = 7$ time points, for varying noise level. The number of ensembles is $K = 3$. The lines correspond to the median over 500 simulations, and the confidence regions cover 90\% of the observed errors.}
    \label{fig:ex_3groups_param_error}
\end{figure}

With this, we use the proposed problem \eqref{eq:problem_form} to estimate the dynamics parameter $\theta_k$, $k = 1,2,3$, as well as the ensembles. It may be noted that we do not assume knowledge of $N_k$, $k = 1,2,3$. We implement \eqref{eq:problem_form} by the block coordinate descent method in Algorithm~\ref{alg:BCD}. As initial points, we randomly draw candidate dynamics from a Gaussian distribution and run the algorithm until convergence. We do this for 10 initial points and pick as the estimate the limit point with the smallest objective value. The error $\norm{\hat{\theta} - \theta}^2 = \sum_{k=1}^K \norm{\hat{\theta}_k - \theta_k}^2$, where $\hat{\theta} = \{ \hat{\theta}_1, \hat{\theta}_2, \hat{\theta}_3 \}$ is the estimate of the dynamics parameters, is shown in Figure~\ref{fig:ex_3groups_param_error} for different values of the noise variance $\sigma^2$. The presented error is the median of 500 simulations for each value of $\sigma^2$. Also presented are empirical confidence regions covering 90\% of the observed parameter errors.
We have superimposed corresponding quantities for two reference methods; one oracle and one semi-oracle. The oracle estimator is the least-squares estimator of $\theta_1$, $\theta_2$, $\theta_3$ that has full information of the particles' trajectories and their ensemble identities. The semi-oracle has access to the particle trajectories\footnote{That is, for the semi-oracle the particles are not indistinguishable.} but not the ensemble identities. The dynamics parameter are estimated for each particle trajectory individually and then clustered using the K-means algorithm, after which the common ensemble dynamics are re-estimated by least-squares\footnote{It may be noted that the re-estimation step yields higher statistical accuracy than simply using the cluster centroids.}. For the semi-oracle, the K-means clustering is run 100 times, using different random starting points for the cluster centroids.
As can be seen from Figure~\ref{fig:ex_3groups_param_error}, the proposed method performs on par with the oracle for the lower noise levels. For larger state noise, the error increases as a smaller fraction of the particles get correctly grouped in the respective ensembles. The estimated probability of correct classification, defined as the fraction of particles being correctly labeled, is presented in Figure~\ref{fig:ex_3groups_success_probability}. As may be seen, the proposed method maintains high classification accuracy also for higher noise levels. It may be noted that although the semi-oracle has access to full trajectories, the estimates of the corresponding individual dynamics have low accuracy, which causes the clustering to fail often enough as to yield quite large observed errors.
\begin{figure}
    \centering
    \includegraphics[trim={20 0 40 10}, clip, width=\linewidth]{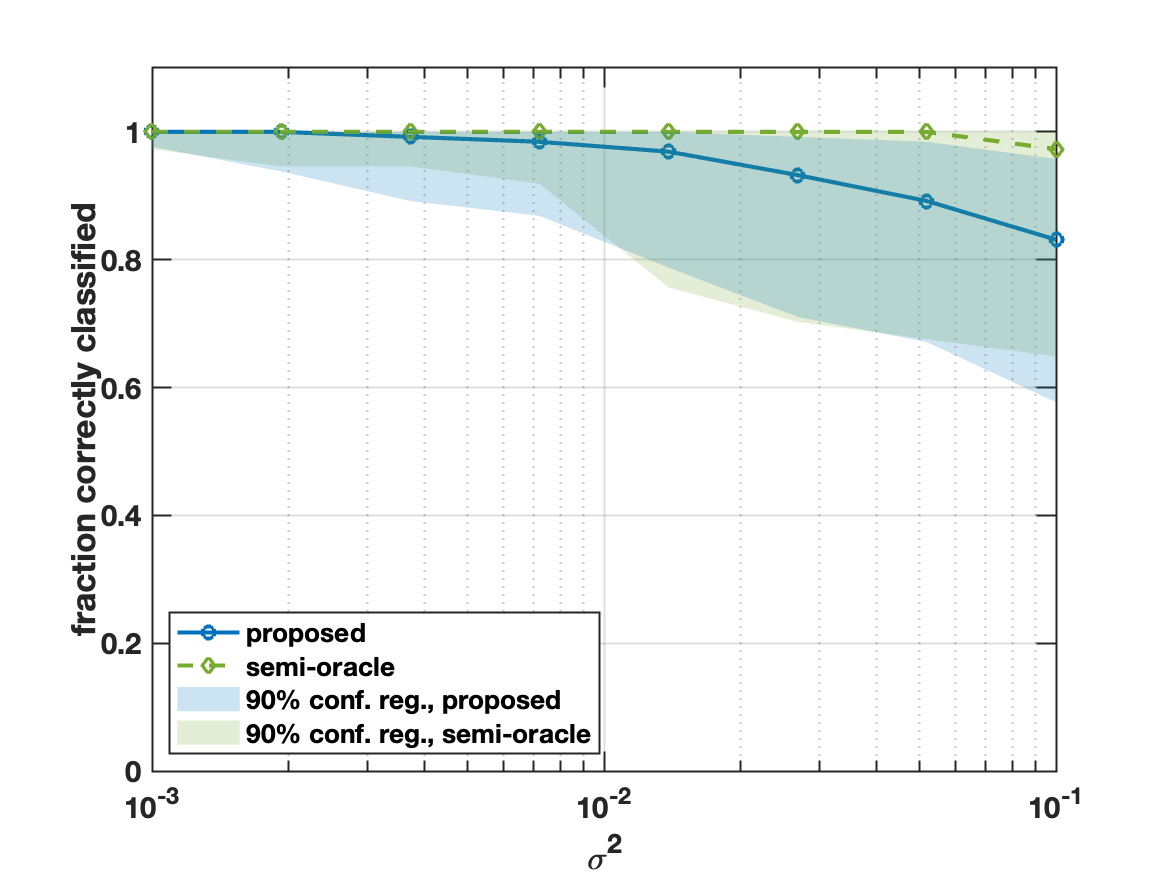}
    \caption{The fraction of particles correctly grouped into their corresponding ensembles for the same scenario as Figure~\ref{fig:ex_3groups_param_error}. The lines correspond to the median over 500 simulations, and the confidence regions cover 90\% of the observed classification fractions.}
    \label{fig:ex_3groups_success_probability}
\end{figure}
\section{Conclusions}

In this work, we introduced an optimal transport framework to identify and separate subpopulations with distinct dynamics from aggregate observations.
We formulated the problem as a bi-convex optimization problem and proposed to solve it using a block coordinate descent method with convergence guarantees.
Numerical experiments confirmed that our method achieves close-to-oracle performance even in noisy settings, highlighting its robustness and practical applicability.

We see a great potential for the proposed framework for problems in control theory, as well as related fields of research such as signal processing.
For example, we have in a follow-up paper applied our framework to the problem of separating audio sources for real speech data \cite{fabiani2025joint}.
Another interesting application domain is in shape matching; we note that our proposed framework has similarities with the well-known iterative closest point method in computer vision \cite{besl1992method}.

\balance

\bibliography{ref}
\bibliographystyle{abbrv}



\end{document}